\newtheorem{theorem}{Theorem}
\theoremstyle{plain}
\newtheorem{corollary}{Corollary}
\newtheorem{lemma}{Lemma}
\numberwithin{equation}{section}
\begin{document}
\title[Boundedness of the bilinear Bochner-Riesz Means]{ Boundedness of the
bilinear Bochner-Riesz Means\\
in the non-Banach triangle case}
\author{Heping Liu}
\address{School of Mathematical Sciences, Peking University, Beijing,
100871, P. R. China, }
\email{hpliu@math.pku.edu.cn}
\author{Min Wang}
\address{School of Mathematical Sciences, Peking University, Beijing,
100871, P. R. China, }
\email{wangmin09150102@163.com}
\date{}
\subjclass[2010]{Primary 42B08; Secondary 42B15}
\keywords{bilinear Bochner-Riesz means, restriction theorem}

\begin{abstract}
In this article, we investigate the boundedness of the bilinear
Bochner-Riesz means $S^{\alpha }$ in the non-Banach triangle case. We
improve the corresponding results in \cite{Bern} in two aspects: Our
partition of the non-Banach triangle is simpler and we obtain lower
smoothness indices $\alpha (p_{1},p_{2})$ for various cases apart from $1
\leq p_1=p_2 <2$.
\end{abstract}

\maketitle

\section{Introduction}

The bilinear Bochner-Riesz means problem originates from the study of the
summability of the product of two $n$-dimensional Fourier series. This leads
to the study of the $L^{p_{1}}\times L^{p_{2}}\rightarrow L^{p}$ boundedness
of the bilinear Fourier multiplier operator 
\begin{equation*}
S^{\alpha }(f,g)(x)=\int \int_{\left\vert \xi \right\vert ^{2}+\left\vert
\eta \right\vert ^{2}\leq 1}(1-\left\vert \xi \right\vert ^{2}-\left\vert
\eta \right\vert ^{2})^{\alpha }\widehat{f}(\xi )\widehat{g}(\eta )e^{2\pi
ix\cdot (\xi +\eta )}d\xi d\eta ,
\end{equation*}
where $x\in \mathbb{R}^{n}$, $f$, $g$ are functions on $\mathbb{R}^{n}$ and $%
\widehat{f}$, $\widehat{g}$ are their Fourier transforms. Of course, we hope
to obtain a smoothness index $\alpha (p_{1},p_{2})$ as low as possible such
that $S^{\alpha}$ is bounded from $L^{p_{1}} \times L^{p_{2}}$ into $L^{p}$
for $1 \leq p_1, p_2 \leq \infty$ and $1/p=1/p_{1}+1/p_{2}$ when $\alpha >
\alpha (p_{1},p_{2})$. Bernicot et al. \cite{Bern} gave a comprehensive
study on this problem. Their results are pretty well when $n=1$ and in the
Banach triangle case when $n \geq 2$. In this article, we investigate the $%
L^{p_{1}} \times L^{p_{2}} \rightarrow L^{p}$ boundedness of $S^{\alpha }$
in the non-Banach triangle case, i.e. $1 \leq p_1, p_2 \leq \infty$ and $p<1$%
, when $n \geq 2$. We improve the corresponding results in \cite{Bern} in
two aspects. Firstly, our partition of the non-Banach triangle is simpler.
Secondly, we obtain lower smoothness indices $\alpha (p_{1},p_{2})$ for
various cases apart from $1 \leq p_1=p_2 <2$.

Our results are summarized in the following theorem. The pictures also
display the comparison of two partitions.

\begin{theorem}
\label{mainTh} Assume that $n \geq 2$. Let $1\leq p_{1}, p_{2} \leq \infty $
and $1/p=1/p_{1}+1/p_{2}$.

\textrm{(1)(region I)} For $1\leq p_{1} \leq 2\leq p_{2} \leq \infty $ and $%
p< 1$, if $\alpha >n \big( \frac{1}{p_{1}}-\frac{1}{2}\big)$, then $%
S^{\alpha}$ is bounded from $L^{p_{1}} \times L^{p_{2}}$ to $L^{p}$; For $%
1\leq p_{2} \leq 2\leq p_{1} \leq \infty$ and $p< 1$, if $\alpha >n \big(
\frac{1}{p_{2}}-\frac{1}{2}\big) $, then $S^{\alpha }$ is bounded from $%
L^{p_{1}} \times L^{p_{2}}$ to $L^{p}$.

\textrm{(2)(region II)} For $1\leq p_{1} \leq p_{2} \leq 2$, if $\alpha > n%
\big( \frac{1}{p}-1 \big)- \big( \frac{1}{p_{2}}- \frac{1}{2} \big)$, then $%
S^{\alpha}$ is bounded from $L^{p_{1}} \times L^{p_{2}}$ to $L^{p}$; For $%
1\leq p_{2} \leq p_{1} \leq 2$, if $\alpha >n\big( \frac{1}{p}-1 \big)- %
\big( \frac{1}{p_{1}}- \frac{1}{2} \big)$, then $S^{\alpha}$ is bounded from 
$L^{p_{1}} \times L^{p_{2}}$ to $L^{p}$.
\end{theorem}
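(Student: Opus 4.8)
\emph{Overall plan.} I would combine a dyadic decomposition of the multiplier near the sphere $\{|\xi|^{2}+|\eta|^{2}=1\}\subset\mathbb{R}^{2n}$ with a short list of single-scale endpoint estimates, and then recover Theorem \ref{mainTh} in full by bilinear interpolation. First write $(1-|\xi|^{2}-|\eta|^{2})_{+}^{\alpha}=m_{0}+\sum_{j\ge 1}2^{-j\alpha}m_{j}$, where $m_{0}\in C_{c}^{\infty}$ and $m_{j}$ is a symbol adapted to the $2^{-j}$-neighbourhood of the unit sphere of $\mathbb{R}^{2n}$ (so $|\partial^{\beta}m_{j}|\lesssim 2^{j|\beta|}$); if $T_{j}$ denotes the bilinear Fourier multiplier operator with symbol $m_{j}$, then $S^{\alpha}=T_{0}+\sum_{j}2^{-j\alpha}T_{j}$ with $T_{0}$ (Schwartz kernel) harmless. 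Since every target exponent in Theorem \ref{mainTh} satisfies $p\le 1$, the quasi-triangle inequality $\|\sum_{j}h_{j}\|_{p}^{p}\le\sum_{j}\|h_{j}\|_{p}^{p}$ reduces matters to single-scale bounds $\|T_{j}(f,g)\|_{L^{p}}\lesssim 2^{j\gamma}\|f\|_{L^{p_{1}}}\|g\|_{L^{p_{2}}}$ with $\gamma=\gamma(p_{1},p_{2})$ the asserted index, after which summing a geometric series gives the conclusion for $\alpha>\gamma$. Writing $T_{j}(f,g)(x)=\int\int K_{j}(x-y,x-z)f(y)g(z)\,dy\,dz$, stationary phase on the sphere gives $|K_{j}(w)|\lesssim 2^{-j}(1+|w|)^{-(2n-1)/2}$ for $|w|\lesssim 2^{j}$ with rapid decay beyond; decomposing $\mathbb{R}^{2n}$ into dyadic shells $|w|\sim 2^{s}$ and using elementary H\"older-type estimates on each shell yields, for \emph{every} admissible $(p_{1},p_{2})$, the ``baseline'' bound $\gamma\le n-\tfrac12$ --- which already settles the vertex $(p_{1},p_{2})=(1,1)$ of region II, where $n-\tfrac12$ is exactly the asserted index.

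\emph{Reduction to two improvements.} Regions I and II are relatively open triangles in the $(1/p_{1},1/p_{2})$-square, and a direct computation shows that the asserted $\gamma$ is the affine interpolant over each triangle of its vertex values, namely $0$ at $(p_{1},p_{2})=(2,2)$, $n/2$ at $(1,2)$ and at $(1,\infty)$, and $n-\tfrac12$ at $(1,1)$. Hence by multilinear interpolation --- valid even though the target exponent may be $<1$ --- it remains only to improve the baseline at the other vertices, that is, to prove: (i) $\|T_{j}(f,g)\|_{L^{1}}\lesssim\|f\|_{L^{2}}\|g\|_{L^{2}}$ uniformly in $j$; (ii) $\|T_{j}(f,g)\|_{L^{1}}\lesssim 2^{jn/2}\|f\|_{L^{1}}\|g\|_{L^{\infty}}$; and (iii) $\|T_{j}(f,g)\|_{L^{2/3}}\lesssim 2^{jn/2}\|f\|_{L^{1}}\|g\|_{L^{2}}$; together with their $p_{1}\leftrightarrow p_{2}$ mirror images. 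Estimate (i) is on the diagonal and is the classical $L^{2}\times L^{2}\to L^{1}$ bilinear Bochner-Riesz bound from \cite{Bern} (Plancherel plus the $L^{2}$ extension estimate for the sphere). Estimates (ii) and (iii) are off the diagonal, and they are the new input --- the source of the improvement over \cite{Bern}.

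\emph{The restriction-type estimates (ii), (iii).} Here the size of $K_{j}$ alone only reproduces the baseline exponent $n-\tfrac12$, so the oscillation of $K_{j}$ must be exploited. Decompose $S^{2n-1}$ into $2^{-j/2}$-caps; the induced frequency localizations of $f$ and of $g$ have almost disjoint supports, so the cap pieces $T_{j}^{\kappa}$ are $L^{2}$-almost orthogonal, and summing them \emph{with} this orthogonality (not by the triangle inequality, which only gives back the baseline) by means of the Stein-Tomas restriction theorem for $S^{2n-1}$ --- equivalently, the sharp single-scale linear Bochner-Riesz estimates --- produces the gain $2^{jn/2}$, yielding (iii). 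For (ii) one first freezes the $L^{\infty}$ entry, so that $f\mapsto T_{j}(f,g)$ becomes a sum over $2^{-j/2}$-caps in the $g$-variable of single-scale linear Bochner-Riesz-type operators in $\mathbb{R}^{n}$ applied to $f$; the same orthogonality/restriction input, now in $\mathbb{R}^{n}$, controls the cap sum and gives (ii).

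\emph{Where the difficulty lies.} I expect the decisive and most delicate point to be the mixed estimate (iii), $\|T_{j}(f,g)\|_{L^{2/3}}\lesssim 2^{jn/2}\|f\|_{L^{1}}\|g\|_{L^{2}}$: the naive kernel bound overshoots the correct exponent by $(n-1)/2$, so the cancellation in $K_{j}$ must genuinely be recovered through the restriction theorem, and the cap decomposition of the $\mathbb{R}^{2n}$-sphere has to be organized so as to respect the \emph{unequal} integrability of $f$ and $g$ --- rather than collapsing to the diagonal case $p_{1}=p_{2}$ already treated in \cite{Bern}. A secondary technical matter is that the interpolation in the second step is performed with a quasi-Banach target $L^{p}$, $p<1$, so one must use multilinear interpolation valid in that range.
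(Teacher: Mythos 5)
Your global architecture is the same as the paper's: dyadic decomposition of the multiplier at scales $1-|\xi|^{2}-|\eta|^{2}\sim 2^{-j}$, reduction to estimates at the vertex triples $(2,2,1)$, $(1,2,\tfrac23)$, $(2,1,\tfrac23)$, $(1,\infty,1)$, $(\infty,1,1)$, $(1,1,\tfrac12)$, and then bilinear interpolation (real method as in \cite{Graf}, or analytic families as described in \cite{Bern}) to fill regions I and II; your identification of the asserted indices as the affine interpolants of the vertex values is exactly the paper's closing observation. The genuine gap is that the two estimates which carry all the new content --- your (ii) $L^{1}\times L^{\infty}\to L^{1}$ and especially (iii) $L^{1}\times L^{2}\to L^{2/3}$ with gain $2^{jn/2}$ --- are not proved but only asserted, and the mechanism you propose for them is both unsubstantiated and partly wrong. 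First, the claim that the $2^{-j/2}$-caps of $S^{2n-1}$ induce ``almost disjoint'' frequency localizations of $f$ and $g$ is false: roughly $2^{j(n-1)/2}$ distinct caps project onto essentially the same $\xi$-ball, so the cap pieces are not separated in either input variable. Second, ``$L^{2}$-almost orthogonality'' has no direct meaning when the target is the quasi-norm $L^{2/3}$ (or even $L^{1}$) and one input is merely $L^{1}$; you give no mechanism for summing the cap pieces in that topology, nor any computation showing that Stein--Tomas for $S^{2n-1}$ actually produces the exponent $2^{jn/2}$ rather than some restriction-theoretic exponent. Third, nothing in your sketch addresses how one passes to an output exponent $p<1$: some spatial localization to balls of radius comparable to the kernel's effective support, followed by H\"older and an averaging over translates, is indispensable here, and it is absent.

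For comparison, the paper obtains (ii) and (iii) (with a harmless $2^{\varepsilon j}$ loss, which suffices since all hypotheses in Theorem \ref{mainTh} are open conditions) by completely elementary means and deliberately without nontrivial restriction estimates: the symbol $\varphi_{j}^{\alpha}(|s|,|t|)$ is expanded in a Fourier series in $t$, which separates the two variables; each factor is then handled by Cauchy--Schwarz in $x$ together with Lemma \ref{restrictionLe}, which is nothing more than Plancherel plus H\"older for the thickened sphere, giving $\|T_{j}^{\alpha}(f,g)\|_{1}\lesssim 2^{-j(\alpha-\delta)}\|f\|_{p_{1}}\|g\|_{p_{2}}$ for all $1\le p_{1},p_{2}\le 2$; finally the kernel is localized to $B_{j}\times B_{j}$ with $B_{j}$ of radius $2^{j(1+\gamma)}$, and an averaging-over-translates argument converts the $L^{1}$-target bound into an $L^{p}$-target bound at the cost $2^{jn(1+\gamma)(\frac1p-1)}$ (Theorem \ref{th2}), with the variant $\|g\|_{L^{2}(B_{j})}\le 2^{j(1+\gamma)n/2}\|g\|_{\infty}$ giving Corollary \ref{1infty}. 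If you want to keep your restriction-theoretic route, you must (a) replace the false disjointness claim by an honest treatment of the overlapping projections, (b) explain how orthogonality or restriction is exploited with an $L^{1}$ input and a sub-unit target exponent, and (c) supply the localization step; alternatively, note that the sharp constants you claim (exactly $2^{jn/2}$, and uniformity in $j$ at $(2,2)$) are not needed, and the paper's elementary separation-of-variables argument already delivers everything required.
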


\includegraphics[scale=0.85]{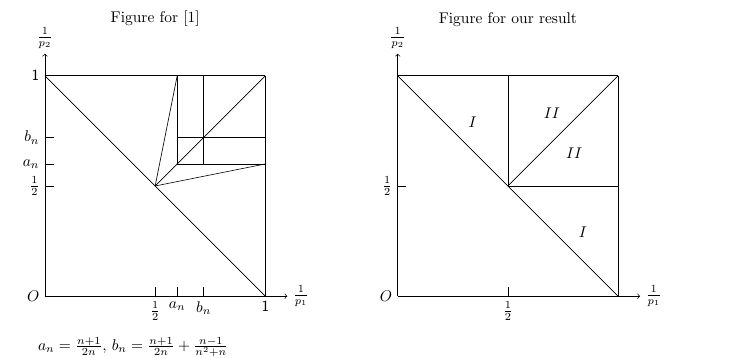}

\textbf{Remark}: Fefferman \cite{Feff} pointed out that the restriction
estimates apply to the study of the boundedness of Bochner-Riesz means.
Stein's earlier result in \cite{Stein} was improved by using the restriction
theorem (see \cite{Stein2}). Our results are obtained without the help of
non-trivial restriction estimates. In contrast, the results in \cite{Bern}
depend heavily on the restriction-extension estimates. It seems still a
problem that how apply the restriction estimates to obtain a better result
about the boundedness of the bilinear Bochner-Riesz means.

\vskip 0.5 cm

\section{Preliminaries}

We state some facts about the Bessel function and the kernel of the
Bochner-Riesz operator. These facts can be found in many references, for
example, in \cite{Stein2}.

We define the restriction operators 
\begin{equation*}
R_{\lambda }f(x)=\int_{\mathbb{S}^{n-1}} \widehat{f}(\lambda \omega )e^{2\pi
ix\cdot \lambda \omega }\, d\sigma (\omega ), \quad \lambda >0,
\end{equation*}
where $d\sigma $ denote the surface measure on $\mathbb{S}^{n-1}$. Then 
\begin{equation*}
R_{1}f(x)= \int_{\mathbb{S}^{n-1}}\widehat{f}(\omega )e^{2\pi ix\cdot \omega
}d\sigma (\omega )= f\ast \widehat{d\sigma },
\end{equation*}
It is known that 
\begin{equation*}
\widehat{d\sigma }(x)=\int_{\mathbb{S}^{n-1}}e^{2\pi ix\cdot \omega }d\sigma
(\omega )=\frac{2\pi }{\left\vert x\right\vert ^{\frac{n-2}{2}}}J_{\frac{n-2 
}{2}}(2\pi \left\vert x\right\vert ),
\end{equation*}
where $J_{k}$ is the Bessel function given by 
\begin{equation*}
J_{k}(r)= \frac{(\frac{r}{2})^k}{\Gamma (k+ \frac{1}{2}) \pi^{\frac{1}{2}}}
\int_{-1}^1 e^{irt} (1-t^2)^{k-\frac{1}{2}}\, dt, \quad k> -\frac{1}{2}.
\end{equation*}
By a dilation argument, we have 
\begin{equation*}
R_{\lambda }f=f\ast \varphi _{\lambda },\quad \lambda >0,
\end{equation*}
where 
\begin{equation*}
\varphi _{\lambda }(x)=\widehat{d\sigma }\left( \lambda x\right) =\frac{2\pi 
}{\left( \lambda \left\vert x\right\vert \right) ^{\frac{n-2}{2}}} J_{\frac{
n-2}{2}}(2\pi \lambda \left\vert x\right\vert ).
\end{equation*}

The bilinear Bochner-Riesz means can be written as 
\begin{equation*}
S_{R}^{\alpha }(f,g)(x)=\int_{0}^{\infty }\int_{0}^{\infty }\left( 1-\frac{
\lambda _{1}^{2}+\lambda _{2}^{2}}{R^{2}}\right) _{+}^{\alpha }R_{\lambda
_{1}}f(x)R_{\lambda _{2}}g(x)\lambda _{1}^{n-1} \lambda _{2}^{n-1}\,
d\lambda _{1}d\lambda _{2}.
\end{equation*}
It is easy to see that 
\begin{equation*}
S_{R}^{\alpha }(f,g)(x)=\int_{\mathbb{R}^{n}}\int_{\mathbb{R}^{n}}
f(x-x_{1}) g(x-x_{2}) S_{R}^{\alpha }(x_{1},x_{2})\, dx_{1}dx_{2},
\end{equation*}
where the kernel is given by 
\begin{equation*}
S_{R}^{\alpha }(x_{1},x_{2})=\int_{0}^{\infty }\int_{0}^{\infty }\left( 1- 
\frac{\lambda _{1}^{2}+\lambda _{2}^{2}}{R^{2}}\right) _{+}^{\alpha }\varphi
_{\lambda _{1}}(x_{1})\varphi _{\lambda _{2}}(x_{2})\lambda
_{1}^{n-1}\lambda _{2}^{n-1}\, d\lambda _{1}d\lambda _{2}.
\end{equation*}
Because 
\begin{equation*}
S_{R}^{\alpha }(x_{1},x_{2})= R^{2n} S_{1}^{\alpha }(Rx_{1},Rx_{2}),
\end{equation*}
by a dilation argument, we know that the $L^{p_{1}} \times
L^{p_{2}}\rightarrow L^{p}$ boundedness of the operator $S_{R}^{\alpha }$ is
deduced from the $L^{p_{1}}\times L^{p_{2}}\rightarrow L^{p}$ boundedness of
the operator $S^{\alpha}:=S_{1}^{\alpha }$ as $1/p=1/p_{1}+1/p_{2}$.

We note that the kernel of the bilinear Bochner-Riesz operator $S^{\alpha}$
on $\mathbb{R}^{n}$ coincides with the kernel of the Bochner-Riesz operator
on $\mathbb{R}^{2n}$. This means 
\begin{equation*}
S^{\alpha }(x_{1},x_{2})= \frac{\Gamma (1+ \alpha)}{\pi^{\alpha} \big |%
(x_{1},x_{2})\big |^{n+ \alpha}} J_{n+ \alpha} \big( 2\pi \big |(x_{1},x_{2})%
\big | \big).
\end{equation*}
The Bessel function satisfies the asymptotic estimate 
\begin{equation*}
J_{k}(r)= O \big( r^{-\frac{1}{2}}\big), \quad r \rightarrow \infty.
\end{equation*}
As a consequence, we have a basic result: $S^{\alpha}$ is bounded from $%
L^{p_{1}}\times L^{p_{2}}$ into $L^{p}$ when $\alpha > n- \frac{1}{2}$,
which is optimal in case of $(p_1,p_2, p)= (1,1, \frac{1}{2})$.

\vskip 0.5 cm

\section{Proof of results.}

\begin{lemma}
\label{restrictionLe} Suppose $m\in L^{\infty }(\mathbb{R})$. Define
operator $T_{m}f=\int_{a}^{b}m(\lambda )R_{\lambda }f\,\lambda
^{n-1}d\lambda $ for $0\leq a<b$. Then, for $1\leq p\leq 2$, we have 
\begin{equation*}
\left\Vert T_{m}f\right\Vert _{2}\leq C\left( (b-a)b^{n-1}\right)^{\frac{1}{2%
}} \left\Vert m\right\Vert _{\infty }\left\Vert f\right\Vert _{p}
\end{equation*}
\end{lemma}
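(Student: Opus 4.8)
The plan is to estimate $\|T_m f\|_2$ via Plancherel and then interpolate the resulting $L^2 \to L^2$ bound against the trivial $L^1 \to L^2$ bound. First I would observe that $T_m f$ has Fourier transform supported in the annulus $a \le |\xi| \le b$: indeed, writing in polar coordinates $\xi = \lambda\omega$, the operator $R_\lambda f$ picks out the spherical slice $\widehat{f}(\lambda\omega)\,d\sigma(\omega)$, so
\begin{equation*}
\widehat{T_m f}(\lambda\omega) = m(\lambda)\,\widehat{f}(\lambda\omega), \qquad a \le \lambda \le b,
\end{equation*}
and $\widehat{T_m f} = 0$ off this annulus. Hence by Plancherel,
\begin{equation*}
\|T_m f\|_2^2 = \int_a^b \int_{\mathbb{S}^{n-1}} |m(\lambda)|^2\, |\widehat{f}(\lambda\omega)|^2\, d\sigma(\omega)\, \lambda^{n-1}\, d\lambda \le \|m\|_\infty^2 \int_a^b \int_{\mathbb{S}^{n-1}} |\widehat{f}(\lambda\omega)|^2\, d\sigma(\omega)\, \lambda^{n-1}\, d\lambda,
\end{equation*}
and the last integral is at most $\|\widehat{f}\|_\infty^2 \cdot |\{a \le |\xi| \le b\}| \lesssim \|\widehat{f}\|_\infty^2 (b-a) b^{n-1}$ since the annulus has measure comparable to $(b-a)b^{n-1}$. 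Together with $\|\widehat{f}\|_\infty \le \|f\|_1$, this gives the case $p = 1$:
\begin{equation*}
\|T_m f\|_2 \le C\big((b-a)b^{n-1}\big)^{1/2} \|m\|_\infty \|f\|_1.
\end{equation*}

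Next I would handle $p = 2$. Here the measure-of-annulus factor should not appear, and indeed Plancherel gives directly
\begin{equation*}
\|T_m f\|_2 \le \|m\|_\infty \|f\|_2,
\end{equation*}
which is even stronger than claimed (the factor $((b-a)b^{n-1})^{1/2}$ is $\ge$ a constant only when the annulus is large, but in any case the stated inequality for $p=2$ follows as long as $(b-a)b^{n-1} \gtrsim 1$; if one wants it to hold for all $a,b$ one simply notes the statement is presumably applied in the regime where this factor is $\gtrsim 1$, or one interprets the inequality with the understanding that the $p=2$ endpoint is the Plancherel bound). Then I would interpolate: the sublinear (in fact linear) operator $T_m$ maps $L^1 \to L^2$ with norm $C((b-a)b^{n-1})^{1/2}\|m\|_\infty$ and $L^2 \to L^2$ with norm $\|m\|_\infty \le C((b-a)b^{n-1})^{1/2}\|m\|_\infty$, so by the Riesz–Thorin interpolation theorem it maps $L^p \to L^2$ with norm bounded by $C((b-a)b^{n-1})^{1/2}\|m\|_\infty$ for every $1 \le p \le 2$, which is exactly the claim.

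The only genuinely delicate point is the justification that $\widehat{T_m f}$ is given by the spherical decomposition above and is supported in the annulus — i.e., that the polar-coordinate identity $\int_a^b R_\lambda f\, \lambda^{n-1}\, d\lambda$ really reassembles the Fourier inversion integral restricted to the annulus. For $f$ a Schwartz function this is just Fourier inversion written in polar coordinates: $\widehat{R_\lambda f}$ is the measure $\widehat{f}(\lambda\cdot)\,d\sigma$ pushed onto the sphere of radius $\lambda$, and integrating in $\lambda$ against $\lambda^{n-1}\,d\lambda$ recovers the volume integral over the annulus. One then extends from Schwartz functions to general $L^p$ by the a priori bound just obtained and density. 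I expect the interpolation and the $p=1$ estimate to be routine; the bookkeeping of the Fourier-support claim (and making the manipulation of $R_\lambda$ under the integral sign rigorous, e.g. via Fubini on $\mathbb{S}^{n-1} \times [a,b]$) is where a little care is needed, but there is no real obstacle.
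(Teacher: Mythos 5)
Your proof is correct and takes essentially the same route as the paper: the $L^{1}\to L^{2}$ bound via Plancherel (the paper phrases it as Young's inequality with the kernel $G_{m}$ of the multiplier $\chi_{[a,b]}(|\xi|)m(|\xi|)$), the trivial $L^{2}\to L^{2}$ bound, and interpolation. Your caveat at the $p=2$ endpoint is well taken: the paper's own interpolation actually ends with the exponent $\frac{1}{p}-\frac{1}{2}$ rather than $\frac{1}{2}$, and since the lemma is applied only with $a=0$, $b=1$, where $(b-a)b^{n-1}=1$, the discrepancy is immaterial, exactly as you observe.
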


\begin{proof}
\quad The operator $T_{m}$ can be written as 
\begin{equation*}
T_{m}f=f\ast G_{m},
\end{equation*}
where the kernel 
\begin{equation*}
G_{m}(x)=\int_{\mathbb{R}^{n}}\chi _{\lbrack a,b]} (\left\vert \xi
\right\vert )m(\left\vert \xi \right\vert )e^{2\pi i\left\langle x,\xi
\right\rangle }\,d\xi.
\end{equation*}
Applying Plancherel theorem, we have 
\begin{equation*}
\left\Vert G_{m}\right\Vert _{2}^{2}= \int_{a\leq \left\vert \xi \right\vert
\leq b} \left\vert m(\left\vert \xi \right\vert )\right\vert ^{2}\, d\xi
\leq C \left\Vert m\right\Vert _{\infty }^{2}\left( (b-a)b^{n-1}\right).
\end{equation*}
It follows that 
\begin{equation*}
\left\Vert T_{m}f\right\Vert _{2}\leq \left\Vert G_{m}\right\Vert _{2}
\left\Vert f\right\Vert _{1} \leq C \left( (b-a)b^{n-1}\right) ^{\frac{1}{2}
}\left\Vert m\right\Vert _{\infty} \left\Vert f\right\Vert _{1}.
\end{equation*}%
Interpolation with the trivial estimate 
\begin{equation*}
\left\Vert T_{m}f\right\Vert _{2}\leq \left\Vert m\right\Vert _{\infty
}\left\Vert f\right\Vert _{2},
\end{equation*}
we get 
\begin{equation*}
\left\Vert T_{m}f\right\Vert _{2}\leq C\left( (b-a)b^{n-1}\right) ^{(\frac{1%
}{p}-\frac{1}{2})}\left\Vert m\right\Vert _{\infty }\left\Vert f\right\Vert
_{p}.
\end{equation*}
Lemma \ref{restrictionLe} is proved.
\end{proof}

\begin{theorem}
\label{th2} Let $n\geq 2$. Suppose $1\leq p_{1},p_{2}\leq 2$ and $%
1/p=1/p_{1}+1/p_{2}$. If $\alpha >n\big(\frac{1}{p}-1\big)$, then $S^{\alpha
}$ is bounded from $L^{p_{1}}(\mathbb{R}^{n})\times L^{p_{2}}(\mathbb{R}%
^{n}) $ into $L^{p}(\mathbb{R}^{n})$.
\end{theorem}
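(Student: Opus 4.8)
The plan is to decompose the Bochner–Riesz multiplier dyadically away from the sphere and to estimate each piece using the $L^2$-bound for the single-parameter restriction-type operator provided by Lemma~\ref{restrictionLe}. Concretely, write $(1-\lambda_1^2-\lambda_2^2)_+^\alpha = \sum_{j\ge 0} m_j(\lambda_1,\lambda_2)$ where $m_0$ is supported where $1-\lambda_1^2-\lambda_2^2 \ge 1/2$ and, for $j\ge 1$, $m_j$ is supported in the annular region $2^{-j-1}\le 1-\lambda_1^2-\lambda_2^2 \le 2^{-j+1}$, so that $\|m_j\|_\infty \lesssim 2^{-j\alpha}$. This induces a decomposition $S^\alpha(f,g) = \sum_{j\ge 0} S_j(f,g)$, where each $S_j$ is a bilinear operator whose kernel is the product structure $\int\int m_j(\lambda_1,\lambda_2) R_{\lambda_1}f(x)R_{\lambda_2}g(x)\,\lambda_1^{n-1}\lambda_2^{n-1}\,d\lambda_1 d\lambda_2$. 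I would further subdivide the $(\lambda_1,\lambda_2)$-support of $m_j$ into $O(2^j)$ boxes of side roughly $2^{-j}$ (in a suitable parametrization near the quarter-circle $\lambda_1^2+\lambda_2^2=1$), so that on each box $\lambda_1 \in [a_1,a_1+c2^{-j}]$, $\lambda_2\in[a_2,a_2+c2^{-j}]$.

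On a single such box, the bilinear operator factors as $(x\mapsto T^{(1)}f(x)\cdot T^{(2)}g(x))$ up to the coupling in $m_j$; the clean way to handle the coupling is to freeze one variable, or to use that $m_j$ restricted to the box is a bounded function times a tensor-type cutoff, which lets Lemma~\ref{restrictionLe} apply in each variable separately. By Lemma~\ref{restrictionLe} with $b-a \sim 2^{-j}$ and $b\sim 1$, the operator $f\mapsto \int_{a_1}^{a_1+c2^{-j}} R_{\lambda_1}f\,\lambda_1^{n-1}d\lambda_1$ (against an $L^\infty$-normalized multiplier) maps $L^{p_1}\to L^2$ with norm $\lesssim 2^{-j(1/p_1-1/2)}$, and similarly for $g$ with $L^{p_2}\to L^2$ and norm $\lesssim 2^{-j(1/p_2-1/2)}$. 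By Cauchy–Schwarz in $x$ (i.e. $\|uv\|_p\le\|u\|_2\|v\|_2$ when $1/p=1/2+1/2$... but here $1/p=1/p_1+1/p_2 > 1$ may fail this, so instead use Hölder $\|uv\|_p \le \|u\|_{2}\|v\|_{q}$ is not directly it either) — the correct pairing is $\|T^{(1)}f\cdot T^{(2)}g\|_{p}$ with $1/p = 1/p_1+1/p_2$, and since each $T^{(i)}$ lands in $L^2$, one needs $1/p \le 1$; writing $1/p = 1/r_1 + 1/r_2$ with $r_i\ge 2$ is impossible when $p<1$, so instead I estimate $\|S_j(f,g)\|_p$ by first using that the number of boxes is $O(2^j)$ and summing the box contributions in $\ell^p$ (using $p\le 1$ so that $\ell^p$ sums dominate $\ell^1$ is false in the wrong direction) — the right move here, since $p_1,p_2\le 2$ forces $p\le 1$, is to bound $\|S_j(f,g)\|_p \le \big(\sum_{\text{boxes}} \|(\text{box piece})(f,g)\|_p^p\big)^{1/p}$ after noting the box pieces have essentially disjoint frequency supports in $(\xi,\eta)$, and on each box $\|(\text{box piece})(f,g)\|_p \le \|T^{(1)}f\|_2\,\|T^{(2)}g\|_2 \lesssim 2^{-j(1/p_1-1/2)}2^{-j(1/p_2-1/2)}\|f\|_{p_1}\|g\|_{p_2}$, which requires $\|uv\|_p\le\|u\|_2\|v\|_2$, valid exactly when $1/p \le 1$, i.e. always here.

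Putting the pieces together: summing $2^j$ boxes, each of size $\lesssim 2^{-2j(1/p-1)}$ (using $1/p_1+1/p_2 = 1/p$), gives $\|S_j(f,g)\|_p \lesssim 2^{-j\alpha}\,\big(2^j \cdot 2^{-2j(1/p-1)p}\big)^{1/p}\|f\|_{p_1}\|g\|_{p_2} = 2^{-j\alpha}\,2^{j/p}\,2^{-2j(1/p-1)}\|f\|_{p_1}\|g\|_{p_2} = 2^{-j(\alpha - 1/p + 2(1/p-1))}\|f\|_{p_1}\|g\|_{p_2} = 2^{-j(\alpha - (2/p - 2 - 1/p + ...))}$; carrying the arithmetic carefully, the exponent works out so that the geometric series $\sum_j 2^{-j(\alpha - n(1/p-1))}$ — with the factor $n$ entering through the dimensional Jacobian when one counts boxes in $\mathbb{R}^n\times\mathbb{R}^n$ rather than in the $(\lambda_1,\lambda_2)$-plane, i.e. each "box" is really a pair of annuli of thickness $2^{-j}$ in $\mathbb{R}^n$, contributing $(2^{-j}\cdot 1)^{n-1}\cdot 2^{-j}$ per Lemma~\ref{restrictionLe} — converges precisely when $\alpha > n(1/p-1)$. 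The main obstacle I anticipate is bookkeeping the coupling between $\lambda_1$ and $\lambda_2$ in $m_j$: one must verify that decomposing the quarter-disc annulus $\{2^{-j}\le 1-\lambda_1^2-\lambda_2^2\le 2^{-j+1}\}$ into roughly $2^j$ product boxes, on each of which the restriction lemma is applied in each variable with $(b-a)\sim 2^{-j}$, both covers the annulus and keeps the multiplier uniformly bounded; once that combinatorial/geometric step is set up cleanly, the rest is Cauchy–Schwarz, Lemma~\ref{restrictionLe}, and summing a geometric series, and one should double-check that the exponent bookkeeping genuinely produces the threshold $n(1/p-1)$ rather than something weaker.
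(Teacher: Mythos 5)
There is a genuine gap, and it sits exactly at the point you flag and then wave away. After applying Lemma~\ref{restrictionLe} in each variable you need to multiply an $L^2$ function by an $L^2$ function and land in $L^p$ with $1/p=1/p_1+1/p_2$. You assert $\|uv\|_p\le\|u\|_2\|v\|_2$ is ``valid exactly when $1/p\le 1$, i.e.\ always here'' --- but in the range of the theorem $1\le p_1,p_2\le 2$ forces $1/p\ge 1$, with strict inequality except at $p_1=p_2=2$. H\"older only gives $\|uv\|_1\le\|u\|_2\|v\|_2$, and on $\mathbb{R}^n$ there is no inequality $\|h\|_p\lesssim\|h\|_1$ for $p<1$; it holds only on sets of finite measure, at the cost $|E|^{1/p-1}$. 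This is precisely the obstruction the paper's proof is built to overcome: one first shows the kernel of each dyadic piece $T_j^\alpha$ decays rapidly beyond scale $2^j$, discards the far-field parts, localizes $f,g$ and the output to balls of radius $\sim 2^{j(1+\gamma)}$, proves the $L^{p_1}\times L^{p_2}\to L^1$ bound $\lesssim 2^{-j(\alpha-\delta)}$ there (decoupling the $\lambda_1$--$\lambda_2$ interaction by a Fourier series expansion of $\varphi_j^\alpha$ in one variable, then Cauchy--Schwarz and Lemma~\ref{restrictionLe}), and only then passes from $L^1$ to $L^p$ by H\"older on the ball, paying $2^{jn(1+\gamma)(1/p-1)}$, and finally averages over the ball centers $y$ in $L^p$ to recover a global estimate. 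That finite-measure H\"older step is where the factor $n\big(\frac1p-1\big)$ comes from; without a localization of this kind your scheme cannot produce it.

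Relatedly, your exponent bookkeeping does not actually yield the claimed threshold. With $b\sim 1$ and $b-a\sim 2^{-j}$, Lemma~\ref{restrictionLe} gives $2^{-j(1/p_i-1/2)}$ per variable (the $b^{n-1}$ factor is $O(1)$, so no dimensional gain appears there), so even granting the invalid H\"older step, each box contributes $2^{-j\alpha}2^{-j(1/p-1)}$ and the $\ell^p$ sum over $O(2^j)$ boxes gives $2^{-j(\alpha-1)}$ --- a condition $\alpha>1$ independent of $n$ and of $p$, which cannot be the right answer (for $p_1=p_2=1$ the correct threshold is at least $n\ge 2$). The appeal to a ``dimensional Jacobian when one counts boxes in $\mathbb{R}^n\times\mathbb{R}^n$'' does not correspond to anything in your setup. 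Secondary but also unresolved: the multiplier $m_j$ is not a tensor product on each box, and ``essentially disjoint frequency supports'' of the bilinear box outputs is false (the output frequency is $\xi+\eta$, and these sets overlap heavily); the paper sidesteps both issues by the Fourier-series decoupling and by never invoking output-side orthogonality, relying instead on the spatial localization described above.
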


\begin{proof}
\quad First, we choose a nonnegative function $\varphi $ $\in C_{0}^{\infty
}(\frac{1}{2} ,2)$ satisfying $\sum_{-\infty }^{\infty }\varphi (2^{j}s)=1$, 
$s>0$. For each $j\geq 0$, we set function 
\begin{equation*}
\varphi _{j}^{\alpha }\left( s,t\right) =(1-s^{2}-t^{2})_{+}^{\alpha
}\varphi \left( 2^{j}\left( 1-s^{2}-t^{2}\right) \right),
\end{equation*}
and define bilinear operator 
\begin{equation*}
T_{j}^{\alpha }(f,g)=\int_{0}^{\infty }\int_{0}^{\infty }\varphi
_{j}^{\alpha }\left( \lambda _{1},\lambda _{2}\right) R_{\lambda
_{1}}fR_{\lambda _{2}}g\,\lambda _{1}^{n-1}\lambda _{2}^{n-1}\, d\lambda
_{1}d\lambda _{2}.
\end{equation*}
It is obvious that 
\begin{equation*}
S^{\alpha }=\sum_{j=0}^{\infty }T_{j}^{\alpha },
\end{equation*}
and our result would follow if we can show that when $\alpha >n\big(\frac{1}{
p}-1\big)$, there exists some $\varepsilon >0$ such that for each $j\geq 0$, 
\begin{equation}
\left\Vert T_{j}^{\alpha }\right\Vert _{L^{p_{1}}\times L^{p_{2}}\rightarrow
L^{p}}\leq 2^{-\varepsilon j}.  \label{Tj-}
\end{equation}
Fixing $j\geq 0$. In order to prove (\ref{Tj-}), we define $%
B_{j}=\{x:\left\vert x\right\vert \leq 2^{j(1+\gamma )}\}\subseteq \mathbb{%
\mathbb{R} }^{n}$ and split the kernel $K_{j}^{\alpha }$ of $T_{j}^{\alpha }$
into four parts: 
\begin{equation*}
K_{j}^{\alpha }=K_{j}^{1}+K_{j}^{2}+K_{j}^{3}+K_{j}^{4},
\end{equation*}
where 
\begin{eqnarray*}
K_{j}^{1}(x_{1},x_{2}) &=&K_{j}^{\alpha }(x_{1},x_{2})\chi
_{B_{j}}(x_{1})\chi _{B_{j}}(x_{2}), \\
K_{j}^{2}(x_{1},x_{2}) &=&K_{j}^{\alpha }(x_{1},x_{2})\chi
_{B_{j}}(x_{1})\chi _{B_{j}^{c}}(x_{2}), \\
K_{j}^{3}(x_{1},x_{2}) &=&K_{j}^{\alpha }(x_{1},x_{2})\chi
_{B_{j}^{c}}(x_{1})\chi _{B_{j}}(x_{2}), \\
K_{j}^{4}(x_{1},x_{2}) &=&K_{j}^{\alpha }(x_{1},x_{2})\chi
_{B_{j}^{c}}(x_{1})\chi _{B_{j}^{c}}(x_{2}).
\end{eqnarray*}
Here $\chi _{A}$ stands for the characteristic function of set $A$ and $%
\gamma >0$ is to be fixed. Let $T_{j}^{l}$ be the bilinear operator with
kernel $K_{j}^{l}$, $l=1,2,3,4$. Then, (\ref{Tj-}) would be the consequence
of the estimates 
\begin{equation*}
\left\Vert T_{j}^{l}\right\Vert _{L^{p_{1}}\times L^{p_{2}}\rightarrow
L^{p}}\leq C2^{-\varepsilon j},\quad l=1,2,3,4.
\end{equation*}

We first consider $T_{j}^{4}$. Note that the kernel of $T_{j}^{\alpha }$ is
written as 
\begin{equation*}
K_{j}^{\alpha }(x_{1},x_{2})=\int_{0}^{\infty }\int_{0}^{\infty }\varphi
_{j}^{\alpha }(\lambda _{1},\lambda _{2})\varphi _{\lambda
_{1}}(x_{1})\varphi _{\lambda _{2}}(x_{2})\,\lambda _{1}^{n-1}\lambda
_{2}^{n-1}\,d\lambda _{1}d\lambda _{2}
\end{equation*}%
satisfying 
\begin{equation*}
\left\vert K_{j}^{\alpha }(x_{1},x_{2})\right\vert \leq C2^{-j\alpha
}2^{-j}(1+2^{-j}\left\vert x_{1}\right\vert )^{-M}(1+2^{-j}\left\vert
x_{2}\right\vert )^{-M}
\end{equation*}%
for each $M>0$. So, by H\"{o}lder's inequality and Young's inequality, we
can get 
\begin{eqnarray*}
\left\Vert T_{j}^{4}(f,g)\right\Vert _{p} &\leq &C\left\Vert f\right\Vert
_{p_{1}}\left\Vert g\right\Vert _{p_{2}}\int_{\left\vert x_{1}\right\vert
\geq 2^{j(1+\gamma )}}(1+2^{-j}\left\vert x_{1}\right\vert )^{-M}dx_{1} \\
&&\times \int_{\left\vert x_{2}\right\vert \geq 2^{j(1+\gamma
)}}(1+2^{-j}\left\vert x_{2}\right\vert )^{-M}dx_{2} \\
&\leq &C\left( 2^{jM}2^{-j(1+\gamma )(M-n)}\right) ^{2}\left\Vert
f\right\Vert _{p_{1}}\left\Vert g\right\Vert _{p_{2}}.
\end{eqnarray*}%
Choosing $M$ large enough such that 
\begin{equation*}
M\gamma >(1+\gamma )n,
\end{equation*}%
it follows that 
\begin{equation}
\left\Vert T_{j}^{4}\right\Vert _{L^{p_{1}}\times L^{p_{2}}\rightarrow
L^{p}}\leq C2^{-\varepsilon j}  \label{Tj4}
\end{equation}%
for some $\varepsilon >0$. Similarly, for $T_{j}^{3}$, we have 
\begin{eqnarray*}
\left\Vert T_{j}^{3}(f,g)\right\Vert _{p} &\leq &C\left\Vert f\right\Vert
_{p_{1}}\left\Vert g\right\Vert _{p_{2}}\int_{\left\vert x_{1}\right\vert
\leq 2^{j(1+\gamma )}}(1+2^{-j}\left\vert x_{1}\right\vert )^{-M}dx_{1} \\
&&\times \int_{\left\vert x_{2}\right\vert \geq 2^{j(1+\gamma
)}}(1+2^{-j}\left\vert x_{2}\right\vert )^{-M}dx_{2} \\
&\leq &C2^{j(1+\gamma )n}2^{jM}2^{-j(1+\gamma )(M-n)}\left\Vert f\right\Vert
_{p_{1}}\left\Vert g\right\Vert _{p_{2}}.
\end{eqnarray*}%
Choosing large $M$ satisfying 
\begin{equation*}
M\gamma >2(1+\gamma )n,
\end{equation*}%
it follows that 
\begin{equation}
\left\Vert T_{j}^{3}\right\Vert _{L^{p_{1}}\times L^{p_{2}}\rightarrow
L^{p}}\leq C2^{-\varepsilon j}.  \label{Tj3}
\end{equation}%
for some $\varepsilon >0$. Obviously, (\ref{Tj3}) also holds for $T_{j}^{2}$.

Now, it remains to estimate $T_{j}^{1}$. For any fixed $y\in \mathbb{R} ^{n}$%
, we set $B_{j}(y,R)=\{x:\left\vert x-y\right\vert \leq R2^{j(1+\gamma )}\}$
with $R>0$, and slipt the functions $f$ and $g$ into three parts
respectively: $f=f_{1}+f_{2}+f_{3}$, $g=g_{1}+g_{2}+g_{3}$, where 
\begin{eqnarray*}
f_{1} &=&f\chi _{B_{j}(y,\frac{3}{4})},\qquad \quad\quad\ g_{1}=g\chi
_{B_{j}(y,\frac{3}{4})}, \\
f_{2} &=&f\chi _{B_{j}(y,\frac{5}{4})\backslash B_{j}(y,\frac{3}{4})},\quad
\ g_{2}=g\chi _{B_{j}(y,\frac{5}{4})\backslash B_{j}(y,\frac{3}{4})}, \\
f_{3} &=&f\chi _{\mathbb{R}^{n}\backslash B_{j}(y,\frac{5}{4})},\qquad \ \ \
g_{3}=g\chi _{\mathbb{R}^{n}\backslash B_{j}(y,\frac{5}{4})}.
\end{eqnarray*}
Assume that $\left\vert x-y\right\vert \leq \frac{1}{4}2^{j(1+\gamma )}$.
Since that $f_{3}$ is supported on $\mathbb{R}^{n}\backslash B_{j}(y,\frac{5%
}{4})$, then $f_{3}\neq 0$ leads to 
\begin{equation*}
\left\vert x-x_{1}-y\right\vert \geq \frac{5}{4}2^{j(1+\gamma )}.
\end{equation*}
It follows that 
\begin{equation*}
\left\vert x_{1}\right\vert \geq 2^{j(1+\gamma )}.
\end{equation*}
Note that the kernel $K_{j}^{1}$ is supported on $B_{j}\times B_{j}$. Hence, 
$T_{j}^{1}(f_{3},g)=0$. In the same way, we have $T_{j}^{1}(f,g_{3})=0$.
Since that $f_{2}$ and $g_{2}$ are supported on $B_{j}(y,\frac{5}{4}
)\backslash B_{j}(y,\frac{3}{4})$, then $f_{2},g_{2}\neq 0$ yields that 
\begin{equation*}
\left\vert x_{1}\right\vert \geq \frac{1}{2}2^{j(1+\gamma )}\text{ and }
\left\vert x_{2}\right\vert \geq \frac{1}{2}2^{j(1+\gamma )}.
\end{equation*}
Repeating the proof of (\ref{Tj4}), we get that 
\begin{eqnarray*}
\left\Vert T_{j}^{1}(f_{2},g_{2})\right\Vert _{L^{p}(B_{j}(y,\frac{1}{4}))}
&\leq &C2^{-\varepsilon j}\left\Vert f_{2}\right\Vert _{p_{1}}\left\Vert
g_{2}\right\Vert _{p_{2}} \\
&\leq &C2^{-\varepsilon j}\left\Vert f\right\Vert _{L^{p_{1}}(B_{j}(y,\frac{%
5 }{4}))}\left\Vert g\right\Vert _{L^{p_{2}}(B_{j}(y,\frac{5}{4}))}.
\end{eqnarray*}
Both side of this inequality are the fuction of $y$. So, taking the $L^{p}$
norm with respect to $y$ and using H\"{o}lder's inequality, we have that 
\begin{eqnarray}
&&\left( \int_{\mathbb{\mathbb{R}}^{n}}\int_{B_{j}(y,\frac{1}{4})}\left\vert
T_{j}^{1}(f_{2},g_{2})(x)\right\vert ^{p}dxdy\right) ^{\frac{1}{p}}\notag\\
&\leq &C2^{-\varepsilon j}\left( \int_{\mathbb{R}^{n}}\int_{B_{j}(y,\frac{5}{
4})}\left\vert f(x)\right\vert ^{p_{1}}dxdy\right) ^{\frac{1}{p_{1}}}\left(
\int_{\mathbb{R}^{n}}\int_{B_{j}(y,\frac{5}{4})}\left\vert g(x)\right\vert
^{p_{2}}dxdy\right) ^{\frac{1}{p_{2}}}.  \notag
\end{eqnarray}
Changing variable and exchanging the order of integration, the left side
equals to 
\begin{eqnarray*}
&&\left( \int_{\mathbb{\mathbb{R}}^{n}}\int_{B_{j}(y,\frac{1}{4})}\left\vert
T_{j}^{1}(f_{2},g_{2})(x)\right\vert ^{p}dxdy\right) ^{\frac{1}{p}} \\
&=&\left( \int_{\mathbb{R}^{n}}\int_{\left\vert x\right\vert \leq \frac{1}{4}%
2^{j(1+\gamma )}}\left\vert T_{j}^{1}(f_{2},g_{2})(x+y)\right\vert
^{p}dxdy\right) ^{\frac{1}{p}} \\
&=&\left( \int_{\left\vert x\right\vert \leq \frac{1}{4}2^{j(1+\gamma
)}}\int_{\mathbb{R}^{n}}\left\vert T_{j}^{1}(f_{2},g_{2})(x+y)\right\vert
^{p}dydx\right) ^{\frac{1}{p}} \\
&=&\left( \frac{1}{4}2^{j(1+\gamma )}\right) ^{\frac{n}{p}}\left\Vert
T_{j}^{1}(f_{2},g_{2})\right\Vert _{p}.
\end{eqnarray*}
Similarly, the right side equals to 
\begin{equation*}
C2^{-\varepsilon j}\left( \frac{5}{4}2^{j(1+\gamma )}\right) ^{\frac{n}{%
p_{1} }}\left\Vert f\right\Vert _{p_{1}}\left( \frac{5}{4}2^{j(1+\gamma
)}\right) ^{\frac{n}{p_{2}}}\left\Vert g\right\Vert
_{p_{2}}=C2^{-\varepsilon j}\left( \frac{5}{4}2^{j(1+\gamma )}\right) ^{%
\frac{n}{p}}\left\Vert f\right\Vert _{p_{1}}\left\Vert g\right\Vert _{p_{2}}.
\end{equation*}
This yields that 
\begin{equation}
\left\Vert T_{j}^{1}(f_{2},g_{2})\right\Vert _{p}\leq C2^{-\varepsilon
j}\left\Vert f\right\Vert _{p_{1}}\left\Vert g\right\Vert _{p_{2}}.
\label{f3}
\end{equation}
Since that $f_{1}$ is supported on $B_{j}(y,\frac{3}{4})$, then $%
f_{1},g_{2}\neq 0$ implies that 
\begin{equation*}
\left\vert x_{1}\right\vert \leq 2^{j(1+\gamma )}\text{ and }\left\vert
x_{2}\right\vert \geq \frac{1}{2}2^{j(1+\gamma )}.
\end{equation*}
Repeating the proof of (\ref{Tj3}), we have 
\begin{eqnarray*}
\left\Vert T_{j}^{1}(f_{1},g_{2})\right\Vert _{L^{p}(B_{j}(y,\frac{1}{4} ))}
&\leq &C2^{-\varepsilon j}\left\Vert f_{1}\right\Vert _{p_{1}}\left\Vert
g_{2}\right\Vert _{p_{2}} \\
&\leq &C2^{-\varepsilon j}\left\Vert f\right\Vert _{L^{p_{1}}(B_{j}(y, \frac{%
3}{4}))}\left\Vert g\right\Vert _{L^{p_{2}}(B_{j}(y,\frac{5}{4}))}.
\end{eqnarray*}
Taking the $L^{p}$ norm with respect to $y$ as above, we get that 
\begin{equation}
\left\Vert T_{j}^{1}(f_{1},g_{2})\right\Vert _{p}\leq C2^{-\varepsilon
j}\left\Vert f\right\Vert _{p_{1}}\left\Vert g\right\Vert _{p_{2}}.
\label{f2}
\end{equation}
Obviously, (\ref{f2}) also holds for $T_{j}^{1}(f_{2},g_{1})$.

Finally, we consider $T_{j}^{1}(f_{1},g_{1})$. Because $f_{1},g_{1}\neq 0$
implies that 
\begin{equation*}
\left\vert x_{1}\right\vert \leq 2^{j(1+\gamma )}\text{ and }\left\vert
x_{2}\right\vert \leq 2^{j(1+\gamma )},
\end{equation*}%
we have 
\begin{equation}
T_{j}^{1}(f_{1},g_{1})(x)=T_{j}^{\alpha }(f_{1},g_{1})(x),\quad x\in B_{j}%
\big(y,\frac{1}{4}\big).  \label{f1}
\end{equation}%
Note that $T_{j}^{\alpha }$ can be written as 
\begin{eqnarray*}
T_{j}^{\alpha }(f,g) &=&\int_{0}^{\infty }\int_{0}^{\infty }\varphi
_{j}^{\alpha }\left( \lambda _{1},\lambda _{2}\right) R_{\lambda
_{1}}fR_{\lambda _{2}}g\,\lambda _{1}^{n-1}\lambda _{2}^{n-1}d\lambda
_{1}d\lambda _{2} \\
&=&\frac{1}{2}\int_{[-1,1]^{2}}\varphi _{j}^{\alpha }\left( \left\vert
\lambda _{1}\right\vert ,\left\vert \lambda _{2}\right\vert \right)
R_{\left\vert \lambda _{1}\right\vert }fR_{\left\vert \lambda
_{2}\right\vert }g\,\left\vert \lambda _{1}\right\vert ^{n-1}\left\vert
\lambda _{2}\right\vert ^{n-1}d\lambda _{1}d\lambda _{2}\text{.}
\end{eqnarray*}%
Because for any fixed $s\in \lbrack -1,1]$, the function 
\begin{equation*}
t\rightarrow \varphi _{j}^{\alpha }\left( \left\vert s\right\vert
,\left\vert t\right\vert \right)
\end{equation*}%
is supported in $[-1,1]$ and vanishes at endpoints $\pm 1$, so we can expand
this function in Fourier series by considering a periodic extension on $%
\mathbb{R}$ of period $2$. Then, we have 
\begin{equation*}
\varphi _{j}^{\alpha }(\left\vert s\right\vert ,\left\vert t\right\vert
)=\sum_{k\in \mathbb{Z}}\gamma _{j,k}^{\alpha }(s)e^{i\pi kt}
\end{equation*}%
with Fourier coefficients 
\begin{equation*}
\gamma _{j,k}^{\alpha }(s)=\frac{1}{2}\int_{-1}^{1}\varphi _{j}^{\alpha
}(\left\vert s\right\vert ,\left\vert t\right\vert )e^{-i\pi kt}\,dt\text{.}
\end{equation*}%
It is easy to see that for any $0<\delta <\alpha $, 
\begin{equation*}
\sup_{s\in \lbrack -1,1]}\left\vert \gamma _{j,k}^{\alpha }(s)\right\vert
\left( 1+\left\vert k\right\vert \right) ^{1+\delta }\leq C2^{-j(\alpha
-\delta )}.
\end{equation*}%
$T_{j}^{\alpha }$ can be expressed by 
\begin{eqnarray*}
T_{j}^{\alpha }(f,g) &=&C\int_{[-1,1]^{2}}\varphi _{j}^{\alpha }\left(
\left\vert \lambda _{1}\right\vert ,\left\vert \lambda _{2}\right\vert
\right) R_{\left\vert \lambda _{1}\right\vert }fR_{\left\vert \lambda
_{2}\right\vert }g\,\left\vert \lambda _{1}\right\vert ^{n-1}\left\vert
\lambda _{2}\right\vert ^{n-1}d\lambda _{1}d\lambda _{2} \\
&=&C\sum_{k\in \mathbb{Z}}\int_{[-1,1]^{2}}\gamma _{j,k}^{\alpha }(\lambda
_{1})e^{i\pi k\lambda _{2}}R_{\left\vert \lambda _{1}\right\vert
}fR_{\left\vert \lambda _{2}\right\vert }g\,\left\vert \lambda
_{1}\right\vert ^{n-1}\left\vert \lambda _{2}\right\vert ^{n-1}d\lambda
_{1}d\lambda _{2} \\
&=&C\sum_{k\in \mathbb{Z}}\int_{-1}^{1}\gamma _{j,k}^{\alpha }(\lambda
_{1})R_{\left\vert \lambda _{1}\right\vert }f\,\left\vert \lambda
_{1}\right\vert ^{n-1}\,d\lambda _{1}\int_{-1}^{1}e^{i\pi k\lambda
_{2}}R_{\left\vert \lambda _{2}\right\vert }g\,\left\vert \lambda
_{2}\right\vert ^{n-1}d\lambda _{2}.
\end{eqnarray*}%
Applying Cauchy-Schwartz's inequality and Lemma \ref{restrictionLe}, we get
that 
\begin{eqnarray}
&&\left\Vert T_{j}^{\alpha }(f,g)\right\Vert _{1}  \label{f5} \\
&\leq &C\sum_{k\in \mathbb{Z}}\left\Vert \int_{-1}^{1}\gamma _{j,k}^{\alpha
}(\lambda _{1})R_{\left\vert \lambda _{1}\right\vert }f\,\left\vert \lambda
_{1}\right\vert ^{n-1}\,d\lambda _{1}\right\Vert _{2}\left\Vert
\int_{-1}^{1}e^{i\pi k\lambda _{2}}R_{\left\vert \lambda _{2}\right\vert
}g\,\left\vert \lambda _{2}\right\vert ^{n-1}d\lambda _{2}\right\Vert _{2} 
\notag \\
&\leq &C\sum_{k\in \mathbb{Z}}(1+\left\vert k\right\vert )^{-1-\delta
}\left( \sup_{s\in \lbrack -1,1]}\left\vert \gamma _{j,k}^{\alpha
}(s)\right\vert \left( 1+\left\vert k\right\vert \right) ^{1+\delta }\right)
\left\Vert f\right\Vert _{p_{1}}\left\Vert g\right\Vert _{p_{2}}.  \notag \\
&\leq &C2^{-j(\alpha -\delta )}\left\Vert f\right\Vert _{p_{1}}\left\Vert
g\right\Vert _{p_{2}}.  \notag
\end{eqnarray}%
Using H\"{o}lder's inequality and (\ref{f1}), it follows that 
\begin{eqnarray*}
\left\Vert T_{j}^{1}(f_{1},g_{1})\right\Vert _{L^{p}(B_{j}(y,\frac{1}{4}))}
&\leq &2^{jn(1+\mathbb{\gamma })(\frac{1}{p}-1)}\left\Vert
T_{j}^{1}(f_{1},g_{1})\right\Vert _{L^{1}(B_{j}(y,\frac{1}{4}))} \\
&=&2^{jn(1+\mathbb{\gamma })(\frac{1}{p}-1)}\left\Vert T_{j}^{\alpha
}(f_{1},g_{1})\right\Vert _{L^{1}(B_{j}(y,\frac{1}{4}))} \\
&\leq &C2^{-j(\alpha -\delta )}2^{jn(1+\mathbb{\gamma })(\frac{1}{p}%
-1)}\left\Vert f_{1}\right\Vert _{p_{1}}\left\Vert g_{1}\right\Vert _{p_{2}}
\\
&\leq &C2^{-j(\alpha -\delta )}2^{jn(1+\mathbb{\gamma })(\frac{1}{p}%
-1)}\left\Vert f\right\Vert _{L^{p_{1}}(B_{j}(y,\frac{3}{4}))}\left\Vert
g\right\Vert _{L^{p_{2}}(B_{j}(y,\frac{3}{4}))}.
\end{eqnarray*}%
Taking the $L^{p}$ norm with respect to $y$ yields that 
\begin{equation}
\left\Vert T_{j}^{1}(f_{1},g_{1})\right\Vert _{p}\leq C2^{-j(\alpha -\delta
)}2^{jn(1+\mathbb{\gamma })(\frac{1}{p}-1)}\left\Vert f\right\Vert
_{p_{1}}\left\Vert g\right\Vert _{p_{2}}.  \label{f6}
\end{equation}%
Combining (\ref{f3}), (\ref{f2}) and (\ref{f6}), we conclude that 
\begin{equation*}
\left\Vert T_{j}^{1}(f,g)\right\Vert _{p}\leq C2^{-j(\alpha -\delta
)}2^{jn(1+\mathbb{\gamma })(\frac{1}{p}-1)}\left\Vert f\right\Vert
_{p_{1}}\left\Vert g\right\Vert _{p_{2}}.
\end{equation*}%
Therefore, whenever $\alpha >n\big(\frac{1}{p}-1\big)$, we can choose $%
\mathbb{\gamma },\delta >0$ such that 
\begin{equation*}
\alpha >n(1+\gamma )\Big(\frac{1}{p}-1\Big)+\delta ,
\end{equation*}%
which implies that there exists an $\varepsilon >0$ such that 
\begin{equation*}
\left\Vert T_{j}^{1}\right\Vert _{L^{p_{1}}\times L^{p_{2}}\rightarrow
L^{p}}\leq C2^{-\varepsilon j}.
\end{equation*}%
The proof of Theorem \ref{th2} is completed.
\end{proof}

As a consequence of Theorem \ref{th2}, we can give another proof of the
estimate in case of $(p_1,p_2,p)=(1, \infty, 1)$, which was already obtained
in \cite{Bern}.

\begin{corollary}
\label{1infty} If $\alpha >\frac{n}{2}$, then $S^{\alpha}$ is bounded from $%
L^{1} \times L^{\infty}$ to $L^{1}$.
\end{corollary}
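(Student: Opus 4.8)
The plan is to mimic the proof of Theorem~\ref{th2}, now with the exponent pair $(p_1,p_2)=(1,2)$, and to localize the bounded function $g$ to balls of radius $\sim 2^{j(1+\gamma)}$ on which it can be controlled in $L^2$ via $\|g\chi_B\|_2\le|B|^{1/2}\|g\|_\infty$. Normalizing $\|g\|_\infty\le 1$ and writing $S^\alpha=\sum_{j\ge 0}T_j^\alpha$, it suffices to show $\|T_j^\alpha\|_{L^1\times L^\infty\to L^1}\le C2^{-\varepsilon j}$ for some $\varepsilon>0$. With $B_j=\{|x|\le 2^{j(1+\gamma)}\}$ one splits $K_j^\alpha=K_j^1+K_j^2+K_j^3+K_j^4$ exactly as in the proof of Theorem~\ref{th2}.

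For $T_j^2,T_j^3,T_j^4$ nothing changes: using the pointwise estimate $|K_j^\alpha(x_1,x_2)|\le C2^{-j\alpha}2^{-j}(1+2^{-j}|x_1|)^{-M}(1+2^{-j}|x_2|)^{-M}$ together with H\"older's and Young's inequalities — with $\|g\|_\infty$ in place of $\|g\|_{p_2}$ — the computations leading to \eqref{Tj4} and \eqref{Tj3} give $\|T_j^l\|_{L^1\times L^\infty\to L^1}\le C2^{-\varepsilon j}$ for $l=2,3,4$, provided $M$ is chosen large in terms of $\gamma,n,\alpha$. For $T_j^1$, fix $y$ and decompose $f=f_1+f_2+f_3$, $g=g_1+g_2+g_3$ relative to $B_j(y,\tfrac34)$, $B_j(y,\tfrac54)$ as there; on $B_j(y,\tfrac14)$ one still has $T_j^1(f,g)=\sum_{i,k\in\{1,2\}}T_j^1(f_i,g_k)$ and $T_j^1(f_1,g_1)=T_j^\alpha(f_1,g_1)$. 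The terms with an index $2$ are controlled by kernel decay just as in \eqref{f3}--\eqref{f2}, giving $\|T_j^1(f_i,g_k)\|_{L^1(B_j(y,\frac14))}\le C2^{-\varepsilon j}\|f\|_{L^1(B_j(y,\frac54))}\|g\|_\infty$; for the main term, apply \eqref{f5} with $(p_1,p_2)=(1,2)$,
\[
\|T_j^\alpha(f_1,g_1)\|_1\le C2^{-j(\alpha-\delta)}\|f_1\|_1\|g_1\|_2\le C2^{-j(\alpha-\delta)}2^{jn(1+\gamma)/2}\|f\|_{L^1(B_j(y,\frac34))}\|g\|_\infty,
\]
using $\|g_1\|_2\le|B_j(y,\tfrac34)|^{1/2}\|g\|_\infty\le C2^{jn(1+\gamma)/2}\|g\|_\infty$. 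Taking the $L^1$ norm in $y$ and changing variables exactly as in the proof of Theorem~\ref{th2} — the factor $|B_j(y,\tfrac14)|\sim 2^{jn(1+\gamma)}$ arising on the left cancels the factor $\sim 2^{jn(1+\gamma)}$ coming from $\int_{\mathbb{R}^n}\|f\|_{L^1(B_j(y,\frac34))}\,dy=c\,2^{jn(1+\gamma)}\|f\|_1$ on the right — one obtains
\[
\|T_j^1(f,g)\|_1\le C\big(2^{-j(\alpha-\delta-n(1+\gamma)/2)}+2^{-\varepsilon j}\big)\|f\|_1\|g\|_\infty.
\]

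Since $\alpha>\tfrac n2$, one may fix $\gamma,\delta>0$ small enough that $\alpha-\delta-\tfrac{n(1+\gamma)}2>0$, so $\|T_j^1\|_{L^1\times L^\infty\to L^1}\le C2^{-\varepsilon j}$ for a possibly smaller $\varepsilon>0$; summing the geometric series in $j$ finishes the proof. The one place that needs care is the bookkeeping in the localization of $T_j^1$: one has to verify that integrating the localized inequality in $y$ reproduces the same power $2^{jn(1+\gamma)}$ on both sides, so that the only net cost of replacing $g\in L^\infty$ by $g\chi_{B_j(y,3/4)}\in L^2$ is the factor $2^{jn(1+\gamma)/2}$. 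Granting this, the required smoothness threshold becomes $\alpha>n(1+\gamma)/2+\delta$, which as $\gamma,\delta\to 0$ is exactly $\alpha>n/2$ — consistent with the endpoint $p_1=1$, $p_2=2$, $p=2/3$ of Theorem~\ref{th2}.
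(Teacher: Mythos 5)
Your proposal is correct and follows essentially the same route as the paper: all pieces of the proof of Theorem~\ref{th2} are reused verbatim for $(p_1,p_2,p)=(1,\infty,1)$ except the main term $T_j^1(f_1,g_1)$, which is handled exactly as you do, by applying \eqref{f5} with $(p_1,p_2)=(1,2)$ and the localization bound $\|g_1\|_2\le C2^{jn(1+\gamma)/2}\|g\|_\infty$, leading to the threshold $\alpha>n(1+\gamma)/2+\delta$ and hence $\alpha>n/2$.
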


\begin{proof}
\quad We keep the notations in the proof of Theorem \ref{th2}. The proof of
Theorem \ref{th2} is valid apart from the estimate of $%
T_{j}^{1}(f_{1},g_{1}) $. According to (\ref{f5}), for any $0< \delta <
\alpha$, 
\begin{equation*}
\left\Vert T_{j}^{\alpha }(f,g)\right\Vert _{1}\leq C2^{-j(\alpha -\delta
)}\left\Vert f\right\Vert _{1}\left\Vert g\right\Vert _{2},
\end{equation*}
we have 
\begin{eqnarray*}
\left\Vert T_{j}^{1}(f_{1},g_{1})\right\Vert _{L^{1}(B_{j}(y,\frac{1}{4} ))}
&=&\left\Vert T_{j}^{\alpha }(f_{1},g_{1})\right\Vert _{L^{1}(B_{j}(y, \frac{%
1 }{4}))} \\
&\leq &C2^{-j(\alpha -\delta)}\left\Vert f\right\Vert _{L^{1}(B_{j}(y, \frac{%
3}{4} ))}\left\Vert g\right\Vert _{L^{2}(B_{j}(y,\frac{3}{4}))} \\
&\leq &C2^{-j(\alpha -\delta)}2^{j(1+\gamma )\frac{n}{2}}\left\Vert
f\right\Vert _{L^{1}(B_{j}(y,\frac{3}{4}))}\left\Vert g\right\Vert
_{L^{\infty }(B_{j}(y,\frac{1}{4}))}.
\end{eqnarray*}
It follows that 
\begin{equation*}
\left\Vert T_{j}^{1}(f_{1},g_{1})\right\Vert _{1}\leq C2^{-j(\alpha -\delta
)}2^{j(1+\gamma )\frac{n}{2}}\left\Vert f\right\Vert _{1}\left\Vert
g\right\Vert _{\infty }.
\end{equation*}
Thus, when $\alpha >\frac{n}{2}$, we can choose $\gamma, \delta >0$ such
that $\alpha >\frac{(1+\gamma)n}{2}+\delta $, which yields that there exists 
$\varepsilon >0$ such that 
\begin{equation*}
\left\Vert T_{j}^{1}(f_{1},g_{1})\right\Vert _{1}\leq C2^{-\varepsilon
j}\left\Vert f\right\Vert _{1}\left\Vert g\right\Vert _{\infty }.
\end{equation*}
The proof is completed.
\end{proof}

Now we have obtained the estimate for $S^{\alpha}$ at some specific triples
of points $(p_{1},p_{2},p)$ like 
\begin{equation*}
(1,1,\frac{1}{2}),(1,2,\frac{2}{3}),(2,1,\frac{2}{3}),(2,2,1),(1,\infty
,1),(\infty ,1,1).
\end{equation*}
In fact, our new result is essentially the estimate in case of $%
(p_{1},p_{2},p)= (1,2,\frac{2}{3})$. The results in Theorem \ref{mainTh} can
be obtained by using of the bilinear interpolation via complex method
adapted to the setting of analytic families or real method in \cite{Graf},
which was described in \cite{Bern}.

\vskip 0.5 cm

\textbf{Acknowledgements} {\quad The first author is supported by National
Natural Science Foundation of China (Grant No. 11371036). The second author
is supported by China Scholarship Council (Grant No. 201606010026). }

\end{document}